\newcommand{\Names}{David Hartman, Milan Hlad\'{i}k}
\newcommand{\Title}{{\bf Regularity radius: Properties, approximation and a not a priori exponential algorithm}}
\newcommand{\tluste}[1]{\mbox{\mathversion{bold}$ #1 $}}
\newcommand{\vr}[1]{{{#1}}}
\newcommand{\mace}[1]{{{#1}}}
\newcommand{\mna}[1]{{\mathcal{#1}}}
\newcommand{\imace}[1]{\mbox{$\tluste{#1}$}}
\def\Rad#1{{#1}_\Delta}		
\newcommand{\R}[0]{{\mathbb{R}}}
\def\diag{{\mbox{diag}}}
\def\sgn{{\rm{sgn}}}
\def\eps{{\varepsilon}}
\def\nref#1{$(\ref{#1})$}
\newcommand{\seznam}[1]{{\{1, \ldots, {#1}\}}}
\DeclareMathOperator{\rr}{r}
\def\quo#1{``{#1}''}  
\newcommand{\maxim}[2]{{\max {\{#1; \ {} #2 \}} }}
\newcommand{\mmid}[0]{;\,}              
\newtheorem{theorem}{Theorem}
\numberwithin{theorem}{section}
\newtheorem{proposition}[theorem]{Proposition}
\newtheorem{lemma}[theorem]{Lemma}
\newtheorem{example}[theorem]{Example}
\newtheorem{question}[theorem]{Question}
\titleformat{\section}[runin]
  {\normalfont\large\bfseries}{\thesection}{1em}{}
\titleformat{\subsection}[runin]
  {\normalfont\normalsize\bfseries}{\thesubsection}{1em}{}
\titleformat{\abstract}[runin]
  {\normalfont\normalsize\bfseries}{\thesubsection}{1em}{}
   \renewenvironment{abstract}{%
       \titlepage
       \null\vfil
       \@beginparpenalty\@lowpenalty
         \paragraph{\abstractname:}
         \@endparpenalty\@M
       }%
      {\par\vfil\null\endtitlepage}
   \renewenvironment{abstract}{%
       \if@twocolumn
         \section*{\abstractname}%
       \else
         \small
         \paragraph{\abstractname:}
       \fi}
       {\if@twocolumn\else\par\bigskip\fi}
\begin{document}

\bibliographystyle{plain}

\setcounter{page}{1}

\thispagestyle{empty}

 \title{\Title}
 
 \date{}

\author{
David Hartman\thanks{Charles University, Faculty  of  Mathematics  and  Physics, Department of Applied Mathematics, Malostransk\'e n\'{a}m.~25, 11800, Prague, Czech Republic and Institute of Computer Science, Czech Academy of Sciences, Prague, Czech Republic (hartman@cs.cas.cz).}
\and
Milan Hlad\'{i}k\thanks{Charles University, Faculty  of  Mathematics  and  Physics, Department of Applied Mathematics, Malostransk\'e n\'{a}m.~25, 11800, Prague, Czech Republic (milan.hladik@matfyz.cz).\newline {\bf Funding}: This work was funded by the Czech Science Foundation Grant P403-18-04735S.}
}

\markboth{\Names}{\Title}

\maketitle

\vspace{-1cm}
\begin{abstract}
The radius of regularity sometimes spelled as the radius of nonsingularity is a measure providing the distance of a given matrix to the nearest singular one. Despite its possible application strength this measure is still far from being handled in an efficient way also due to findings of Poljak and Rohn providing proof that checking this property is NP-hard for a general matrix. There are basically two approaches to handle this situation. Firstly, approximation algorithms are applied and secondly, tighter bounds for radius of regularity are applied. Improvements of both approaches have been recently shown by Hartman and Hlad\'{i}k (doi:10.1007/978-3-319-31769-4\_9) utilizing relaxation of computation to semidefinite programming. An estimation of the regularity radius using either of mentioned ways is usually applied to general matrices considering none or just weak assumptions about the original matrix. Surprisingly less explored area is represented by utilization of properties of special class of matrices as well as utilization of classical algorithms extended to be used to compute the considered radius. This work explores a process of regularity radius analysis and identifies useful properties enabling easier estimation of the corresponding radius values. At first, checking finiteness of this characteristic is shown to be a polynomial problem along with determining a maximal bound on number of nonzero elements of the matrix to obtain infinite radius. Further, relationship between maximum (Chebyshev) norm and spectral norm is used to construct new bounds for the radius of regularity. Considering situations where known bounds are not enough, a new method based on Jansson-Rohn algorithm for testing regularity of an interval matrix is presented which is not a priory exponential along with numerical experiments. For a situation where an input matrix has a special form, several corresponding results are provided such as exact formulas for several special classes of matrices, e.g., for totally positive and inverse non-negative, or approximation algorithms, e.g., rank-one radius matrices. For tridiagonal matrices, an algorithm by Bar-On, Codenotti and Leoncini is utilized to design a polynomial algorithm to compute the radius of regularity.
\end{abstract}

\bigskip
\noindent
\textbf{Keywords:}\textit{ regular matrices, radius of regularity, not a priory exponential method, P-matrix, tridiagonal matrix, interval matrix.}

\subsection*{AMS}
15A60, 65G40.


\section{Introduction} \label{intro-sec}

Nonsingularity of a matrix is well known to be an important property in many applications of linear algebra. Let us mention an important area of systems stability studied for example for linear time-invariant dynamical systems with parameters having uncertain values~\cite{RavanbodEtAl2017}. Within these systems questions like distance to instability or minimum stability degrees are solved. This means that more than an actual regularity of a matrix we are interested in the distance to the nearest singular matrix. To express this problem in more realistic terms let $A$ be a matrix containing results of measurements and subsequent computations. Even putting numerical problems aside there can be uncertainty $\epsilon_{i,j}$ in each element $a_{i,j}$ of the matrix $A$. To be able to efficiently account for variation of the original matrix determining a distance to a singular one it is more suitable to introduce fewer parameters. Let us start with a simple case introducing just one parameter $\delta$ and considering any matrices having their values componentwisely between $A - \delta ee^T$ and $A + \delta ee^T$, where $e$ is the vector of ones. In the language of interval linear algebra~\cite{Moore2009}, we can consider the following interval matrix $\imace{A}_{\delta}:=[A-\delta ee^T,A+\delta ee^T]$. 
An interval matrix $\imace{A}$ is called \emph{regular} is it consists merely of nonsingular matrices; otherwise, it is called \emph{singular} (or \emph{irregular}). The search for the distance to a singular matrix can be rephrased as a search for the minimal $\delta$ such that the interval matrix $\imace{A}_{\delta}$ becomes singular. 

Considerations in the above paragraph give rise to the following definition. Let $A \in \R^{n \times n}$ be a square real value matrix. Then {\em the regularity radius}, sometimes called the radius of nonsingularity, can be defined using the following definition
\begin{align}\label{eq:regrad}
\rr(A)
=\min\{\delta\geq0\mid \imace{A}_{\delta} \mbox{ is singular}\},
\end{align}
where $\imace{A}_{\delta}:=[A-\delta ee^T,A+\delta ee^T]$. This definition allows us to produce matrices via elementwise perturbation of the original elements. An interval of such perturbation is the same for all elements. This approach can be too restrictive considering possible application of the regularity radius. For these reasons, a generalized version of the above definition is often considered. Let $\Delta \in \R^{n \times n}$ be a square real value non-negative matrix. Then a generalized version of the regularity radius is
\begin{align}\label{eq:genregrad}
\rr(A,\Delta)
=\min\{\delta\geq0\mid \imace{A}_{\delta\Delta} \mbox{ is singular}\},
\end{align}
where $\imace{A}_{\delta\Delta}:=[A-\delta \Delta ,A+\delta \Delta]$. 

There exist several useful properties of regularity radius in literature that are helpful when considered matrix can be constructed using algebraic expression of, in some sense, simpler matrices. Let us mention these properties for sake of completeness. Let $A$, $B$ and $C$ be square real valued matrices and let $\Delta \in \R^{n \times n}$ be a square real valued non-negative matrix. The following assertions are true~\cite{Roh2012b}.
\begin{enumerate}
\item The radius of sum of matrices cannot exceed the sum of individual radii, i.e.,
\begin{equation*}
d(A + B, \Delta) \leq d(A, \Delta) + d(B, \Delta).
\end{equation*}
\item The larger radius matrix, the smaller regularity radius, i.e.,
\begin{equation*}
0 \leq \Delta \leq \Delta' \text{ implies } d(A, \Delta) \geq d(A, \Delta').
\end{equation*}
\item Multiplication by a constant does not significantly affect complexity of the computation, i.e.,
\begin{equation}\label{eq:rad_mult_constant}
d(\alpha A, \beta \Delta) = \frac{|\alpha|}{\beta} d(A,\Delta) \text{ for } \alpha \in \mathbb{R} \text{ and } \beta > 0.
\end{equation}
\end{enumerate}

Determining the regularity radius using directly one of its definitions is complicated even considering the simpler version. For a straight computation, Poljak and Rohn have shown an analytical formula~\cite{PolRoh1993} which reads as
\begin{align}\label{eqRdel}
\rr(A,\Delta)
=\frac{1}{\max_{y,z\in\{\pm1\}^n}\rho_0(A^{-1}D_y\Delta D_z)}.
\end{align}
where $D_y$ is a diagonal matrix having $y$ as its diagonal, i.e., $(D_y)_{ii} = y_i$ and $(D_y)_{ij} = 0$ for $i\not=j$, and $\rho_0$ is the real spectral radius providing maximum from absolute values of real eigenvalues of the matrix and equal to 0 if no such eigenvalue exists. This equation has been proven by Poljak and Rohn~\cite{PolRoh1993} using one of the equivalent conditions for regularity of an interval matrix~\cite{Roh1989positive}. Considering $\Delta$ to be a matrix $ee^T$, i.e., a matrix consisting of all ones, and substituting this $\Delta$ to the above defined formula results in the following simpler form~\cite{PolRoh1993}:
\begin{align*}
\rr(A,\Delta)
=\frac{1}{\| A^{-1}\|_{\infty,1}},
\end{align*}
where $\|\cdot\|_{\infty,1}$ is the matrix norm defined as
\begin{equation}\label{eq:rnorm}
\|\mace{M}\|_{\infty,1}
:=\maxim{\|\mace{M}\vr{x}\|_1}{\|\vr{x}\|_\infty=1}
=\maxim{\|\mace{M}\vr{z}\|_1}{\vr{z}\in\{\pm1\}^n}.
\end{equation}

For a general matrix $M$, a computation of the above defined norm has been shown to be an NP-hard problem~\cite{PolRoh1993}, and consequently also the problem of computing the regularity radius of $M^{-1}$ is NP-hard. This result has motivated two commonly studied approaches to handle computation of the regularity radius. The first approach is to develop various bounds for its values, while the second approach utilizes approximation algorithms. 

Considering the first approach, the corresponding bounds are mostly based on utilization of different norms of original matrix or variations of its spectral radius. One of the first results providing simple bounds has been proven by Demmel~\cite{Demmel1992} using Perron-Frobenius theorem and block Gaussian elimination, see these bounds in following equation
\begin{align*}
\frac{1}{\rho(A, \Delta)}
\leq 
\rr(A,\Delta)
\leq
\frac{1}{\max_{ij}(|A^{-1}_{ij}|\Delta_{ij})}.
\end{align*}

Following this work, Rump~\cite{Rum1994} has shown that for an interval matrix $\imace{A}_{\Delta} =[A-\Delta ,A+\Delta]$ with a central matrix $A$ having its singular values ordered as $\sigma_1(A) \geq \sigma_2(A) \geq \dots \geq \sigma_n(A)$ a condition $\sigma_n(A) > \sigma_1(\Delta)$ implies regularity of the original interval matrix $\imace{A}_{\Delta} = [A - \Delta, A + \Delta]$. He has also mentioned another criterion for regularity of interval matrix $\imace{A}_{\Delta}$ that reads as $\rho(|A^{-1}|\Delta) < 1$, where $\rho$ stands for the standard spectral radius of a matrix. Following these criteria several bounds can be produced. Already Rump in the mentioned work~\cite{Rum1994} has provided two lower bounds based on the above given conditions along with their mutual comparison, see the resulting bounds below:
\begin{align*}
\frac{1}{\rho(A, \Delta)}
\leq \rr(A,\Delta),
\qquad 
\frac{1}{\sigma_1(A)/\sigma_n(\Delta)}
\leq \rr(A,\Delta).
\end{align*}

Conditions about regularity of an interval matrix have been later extended by Rohn~\cite{Roh1996} who, besides the above defined lower bound, provided a new upper bound defined as follows:
\begin{align}\label{rUpperBoundRohn}
\rr(A,\Delta)
\leq \frac{1}{\max_i(|A^{-1}|\Delta)_{ii}}.
\end{align}

Let us note that this upper bound has also be reproven by Rump~\cite{Rum1997b}. In the same work, Rump has shown that for nonnegative invertible matrices, e.g., for M-matrices, the regularity radius is equal to the mentioned lower bound:
\begin{equation*}
\text{ If } A\in\R^{n\times n} \text{ is nonnegative invertible and } 0 \leq \Delta \in \R^{n\times n}, \text{ then }\quad \frac{1}{\rho(A, \Delta)}
= \rr(A,\Delta).
\end{equation*}

Motivated by this result he has also utilized this lower bound as a tool to tighten interval for the regularity radius. Tightening is realized via adopting the following equation determining parametrically the upper and the lower bounds
\begin{equation}\label{eq:rumpbound}
\frac{1}{\rho(A, \Delta)}
\leq 
\rr(A,\Delta)
\leq
\frac{\gamma(n)}{\rho(A, \Delta)}.
\end{equation}
He has shown that $\gamma(n) = 2.4 \cdot n^{1.7}$, which proves and extends the conjecture given by Demmel. He has also conjectured that $\gamma(n) = n$, supported by the property that no upper bound can be better than this function. Finally, it has been shown that this upper bound is sharp up to a constant factor as $n \leq \gamma(n) \leq (3 + 2\sqrt{2})n$.

As mentioned above, we can use different approach and utilize approximation algorithms to compute values of regularity radius. There is one method of Kolev~\cite{Kol2011}. He makes use of interval analysis namely for the search of real maximum magnitude eigenvalue of an associated generalized eigenvalue problem. His solution is an iterative process requiring several conditions on the whole system. Considering computational complexity, this method is not a priori exponential, but requires some sign invariancy of the corresponding eigenvalue problem. 

Another method is represented by the recent result of Hartman and Hlad\'{i}k~\cite{Hartman2016} that provides a randomized algorithm for computing $\rr(A)$ based on a semidefinite approximation of the corresponding matrix norm~\eqref{eq:rnorm}. As such a semidefinite relaxation can be solved with arbitrary a priori given precision $\epsilon$, see \cite{Grotschel1981}; this result provides the following bounds for matrix norm when considering an arbitrary matrix $M$:
\begin{equation}
	\alpha \gamma \leq \|M_{ij}\|_{\infty, 1} \leq \gamma + \epsilon
\end{equation}
where $\gamma$ is an optimal solution for a norm and $\alpha = 0.87856723$ is the Goemans-Williamson value characterizing the approximation ratio of their approximation algorithm for MaxCut~\cite{Goemans1995}.

There are some extensions of the basic notion of regularity radius. One example is represented by an extension of the regularity radius to structured perturbations pioneered by Kolev \cite{Kol2014b} and further studied in \cite{KolSka2017}. Another extension to a radius of (un)solvability of linear systems was presented by Hlad\'{\i}k and Rohn~\cite{HlaRoh2016a}.

\subsection{Structure of the paper}
The results presented in this paper follow the usual process of analyzing the regularity radius of a given matrix and provide thus suggestions to employ effective processing. There are two definitions of radius of regularity -- the simpler one, see equation~\eqref{eq:regrad}, and more general one, see equation~\eqref{eq:genregrad}. This work contributes to bounding or computing both types. While the paper is organized according to expected process of analyzing regularity radius results for both types are sometimes mixed. We always indicate which type of generality is considered on appropriate places of the paper.

One of the first questions when analyzing the regularity radius should be concerned with utilization of possibly special type of the corresponding matrix. The special type of matrices might be expected due to a frequent determination by a particular structure by the problem under study -- consider common occurrences of tridiagonal matrices in real world problems. This area is surprisingly less explored although some of the results are easy to achieve and strong at the same time. Section~\ref{s:specialmats} presents some observations concerning special types of matrices. The first subsection~\ref{ss:tridiagonalmat} presents utilization of the algorithm developed by by Bar-On, Codenotti and Leoncini~\cite{Baron1996} that represents a polynomial time algorithm to compute the regularity radius for tridiagonal matrices. What follows are results providing exact formulas for various special types of matrices such as totally positive matrices in Section~\ref{s:totpos} and inverse nonnegative matrices~\ref{s:invpos}. For another special class represented by a rank one radius matrix described in Section~\ref{s:rankone} the regularity radius computation in a general form, see equation~\eqref{eq:genregrad}, can be reduced to computation in simpler form, see equation~\eqref{eq:regrad}, as described in Section~\ref{s:rankone}. Moreover, this class inspired us also to design an approximation algorithm which is described in Section~\ref{s:rankoneapprox}. This closes the Section~\ref{s:specialmats} that concerns with special type of matrices.

The following sections assume that we have a general type of a matrix, or we are not aware how exactly our system's structure can be utilized. In these cases we need to apply steps considering general settings. One of the first questions that might be relevant in its application is checking finiteness of the regularity radius. Section~\ref{s:finiteness} provides arguments showing that testing unboundedness of the regularity radius is a polynomial problem. This might be helpful in situations, where we need to exclude extreme cases. Having finiteness decided one is usually willing to apply one of the simple bounds that is sufficient in considered application. There are several bounds known in the literature -- the commonly used bounds are reviewed above. Providing more freedom of choice, another bounds based on relationship between the maximum (Chebyshev) norm and the spectral norm is presented in Section~\ref{s:bounds}. Often, there are situations where no bounds are applicable and we need to compute exact or more accurate value of the regularity radius. As mentioned above, the common methods might involve application of various approximations algorithms. For various reasons, either numerical or processing ones, we might be forced to adopt different approaches, see discussion and argumentation along with definition of not a priori exponential algorithm by Kolev~\cite{Kol2011}. For these reasons we also suggest a new method that is not a priori exponential; see Section~\ref{s:ourmethod}.


\section{Special classes of matrices}\label{s:specialmats}

\subsection{Tridiagonal matrices}\label{ss:tridiagonalmat}

Considering the class of tridiagonal matrices, we can define the regularity radius as a perturbation of only non-zero elements of tridiagonal matrix. Following the notation from~\cite{Baron1996}, let~$T(b_k, a_k, c_k) \in \R^{n\times n}$ be a tridiagonal matrix defined as
\begin{equation*}
\left(\begin{matrix}
a_1 	& c_1 		& ~			& ~	\\
b_2 	& a_2 		& \ddots	& ~	\\
~ 		& \ddots 	& \ddots 	& c_{n-1}\\
~ 		& ~ 		& b_n 		& a_n
\end{matrix}\right)
\end{equation*}

We consider only unreduced tridiagonal matrices, i.e., those having $c_k, b_{k+1} \neq 0$ for all $1 \leq k \leq n - 1$. Let~$T$ be a real tridiagonal matrix $T(b_k, a_k, c_k)$, and let $\Delta \in \R^{n\times n}$ be a matrix having the same zero structure as~$T$. The regularity radius is defined the same way as in the introduction; this time only nonzero elements are perturbed. Bar-On, Codenotti and Leoncini~\cite{Baron1996} have shown that regularity of corresponding interval matrix can be determined in linear time. We can make use of this property to compute also the regularity radius.

\begin{proposition}\label{propTriDiagPol}
Let $A$ be a non-degenerate tridiagonal matrix and $\Delta$ having the same zero structure (i.e., $A_{ij}=0\ \Rightarrow\ \Delta_{ij}=0$). The regularity radius $\rr(A, \Delta)$ is computable in polynomial time with arbitrary precision.
\end{proposition}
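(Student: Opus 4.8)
The plan is to leverage the linear-time regularity test of Bar-On, Codenotti and Leoncini together with a monotonicity-plus-bisection argument. The key structural observation is that for a fixed tridiagonal matrix $A$ and a radius matrix $\Delta$ with the same zero pattern, the family $\imace{A}_{\delta\Delta}=[A-\delta\Delta, A+\delta\Delta]$ is nested and monotone in $\delta$: if $0\le\delta\le\delta'$ then $\imace{A}_{\delta\Delta}\subseteq\imace{A}_{\delta'\Delta}$. Consequently the predicate ``$\imace{A}_{\delta\Delta}$ is singular'' is monotone in $\delta$ — once the interval matrix contains a singular matrix, enlarging $\delta$ keeps it singular. This means there is a well-defined threshold, namely $\rr(A,\Delta)$ itself, below which every sampled $\delta$ yields a regular interval matrix and above which every sampled $\delta$ yields a singular one. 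Monotone predicates are exactly what bisection is designed to locate.

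First I would verify that, for each fixed $\delta$, the perturbed matrix $\imace{A}_{\delta\Delta}$ is again an (interval) tridiagonal matrix: since $\Delta$ shares the zero structure of $A$, the perturbations touch only the three diagonals, so both endpoint matrices $A\pm\delta\Delta$ are tridiagonal and the interval matrix is unreduced whenever $A$ is non-degenerate (the off-diagonal interval entries do not contain zero for small enough $\delta$, and the Bar-On--Codenotti--Leoncini test applies to the general tridiagonal interval case regardless). Then for any candidate value $\delta$ I would invoke their linear-time algorithm to decide in $O(n)$ time whether $\imace{A}_{\delta\Delta}$ is regular or singular. This furnishes a polynomial-time oracle for the monotone predicate.

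Next I would run bisection on the interval $[\ell,u]$ bracketing $\rr(A,\Delta)$. For the lower end take $\ell=0$; for an initial upper bound $u$ one can use any of the finite upper bounds reviewed in the introduction, e.g.\ \eqref{rUpperBoundRohn}, $\rr(A,\Delta)\le 1/\max_i(|A^{-1}|\Delta)_{ii}$, which is computable in polynomial time and finite whenever that denominator is positive (finiteness itself being decidable in polynomial time per the paper's later discussion). Each bisection step halves the bracket and calls the $O(n)$ regularity oracle once; after $k$ steps the bracket width is $(u-\ell)2^{-k}$, so to reach a prescribed precision $\eps$ one needs $O(\log((u-\ell)/\eps))$ oracle calls. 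The total running time is $O(n\log((u-\ell)/\eps))$, which is polynomial in the input size and in $\log(1/\eps)$, establishing that $\rr(A,\Delta)$ is computable in polynomial time to arbitrary precision.

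The main obstacle I anticipate is not the bisection mechanics but the clean reduction of the \emph{generalized} radius $\rr(A,\Delta)$ to a single application of the regularity test at each $\delta$ — specifically, confirming that the Bar-On--Codenotti--Leoncini criterion is stated for, or adapts to, interval tridiagonal matrices of the form $[A-\delta\Delta,A+\delta\Delta]$ with an arbitrary (same-pattern) radius matrix $\Delta$, rather than only the special uniform case $\Delta=ee^T$ restricted to the band. A secondary delicate point is guaranteeing that the predicate is genuinely monotone at the boundary so that bisection converges to the true infimum in \eqref{eq:genregrad} rather than to a nearby value; this follows from the nestedness of the interval matrices, but I would state it explicitly to rule out pathological jumps. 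Once these two points are pinned down, the polynomial-time-with-arbitrary-precision claim is immediate.
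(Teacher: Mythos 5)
Your proposal is correct and follows essentially the same route as the paper's own proof: bisection on $[0,\rr_U]$ with $\rr_U$ taken from \eqref{rUpperBoundRohn}, using the Bar-On--Codenotti--Leoncini linear-time regularity test as the oracle at each candidate $\delta$. Your explicit justification of the monotonicity of the singularity predicate (which is what licenses bisection) is a detail the paper leaves implicit, but the argument and the resulting complexity bound are the same.
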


\begin{proof}
Let $\rr_L$ and $\rr_U$ be lower and upper bounds on $\rr(A, \Delta)$. We can take $\rr_L=0$ and $\rr_U$ from \nref{rUpperBoundRohn}, for instance. Both of them have polynomial size. Now, we apply standard binary search on the interval given by these bounds.  In each of the steps, the actual approximation of the radius $\rr_i$ in fact determines tridiagonal interval matrix $\imace{A}_{\rr_i\Delta}$ for which we can use Bar-On, Codenotti and Leoncini algorithm~\cite{Baron1996} and compute its possible regularity in linear time. Considering the complexity of corresponding steps in binary search, we have the claim.
\end{proof}

This proposition provides a polynomial time algorithm to compute regularity radius. More precisely, the polynomial algorithm suggested in the above proof runs in time $\mathcal{O}\left(n(\rr_U-\rr_L)\log(1/\eps)\right)$. We can see that its time complexity depends on the size of the matrix entries. Therefore, it is still an open question whether a strongly polynomial algorithm exists. For a number $x$, define an operator $\sigma(x)$ as a size of its representation depending on chosen model of computation, e.g., the number of bits for Turing machine.

\begin{question} 
Let $A$ be a tridiagonal matrix. Let $\alpha$ be a rational number and $\sigma(\alpha)$ the size of its corresponding representation. Is the decision problem $r(A)\geq \alpha$ solvable in time $\mathcal{O}\left((n \cdot \sigma(\alpha))^k\right()$?
\end{question}

Notice that $\rr(A, \Delta)$ cannot be computed exactly in general because it can be an irrational number even for a rational input. Consider for example the matrix and the associated weights defined as
\begin{align*}
A=\begin{pmatrix}1&0\\0&1\end{pmatrix},\quad \text{and}\quad
\Delta=\begin{pmatrix}1&2\\1&1\end{pmatrix}.
\end{align*}
Then it is not hard to analytically determine that $\rr(A, \Delta)=\sqrt{2}-1$.

On the other hand, provided $\rr(A, \Delta)$ has polynomial size, the algorithm described in Proposition~\ref{propTriDiagPol} finds it exactly in polynomial time. Of course, this computational complexity is rather of theoretical nature. For real computation, the corresponding algorithm possibly providing an approximation that can be tuned with algorithm complexity should be designed. This leads us to another open question.

\begin{question} How to effectively design an approximate algorithm estimating the regularity radius for tridiagonal matrices based on Bar-On, Codenotti and Leoncini regularity testing~\cite{Baron1996} acquiring good approximation while maintaining feasible complexity?
\end{question}

\subsection{P-matrices and totally positive matrices}\label{s:totpos}

A matrix is a {\em P-matrix} if all its principal minors are positive. The task to check P-matrix property itself is an NP-hard problem~\cite{rum2003}. For some subclasses of P-matrices, computation of the regularity radius can be handled in an easier way.

One of the interesting subclasses of P-matrices from viewpoint of regularity are \emph{totally positive matrices}. A matrix $A\in\R^{n\times n}$ is totally positive if all its minors are positive. Despite the definition, checking this property is a polynomial problem; see Fallat and Johnson \cite{FalJoh2011}.

If $A$ is totally positive, then $\sgn(A^{-1}_{ij})=(-1)^{i+j}$. That is, the signs of the entries of the inverse matrix correspond to the checkerboard structure. Let us denote $s:=(1,-1,1,-1,\dots)^T$. Then we have $\sgn(A^{-1})=ss^T$, and therefore the regularity radius of $A$ reads
\begin{align*}
\rr(A)=\frac{1}{s^TA^{-1}s}.
\end{align*}

This is indeed a high simplification reducing original computation to a simple formula. Conditions determining classes of P-matrices are of great interest considering the regularity radius. We can think about another prominent subclass -- nonsingular M-matrices. This set of matrices is a subset of P-matrices as well as inverse nonnegative matrices. The latter class is again of a great interest, see below.

\subsection{Inverse nonnegative matrices}\label{s:invpos}

A matrix $A\in\R^{n\times n}$ is inverse nonnegative if $A^{-1}\geq0$. As mentioned above, inverse nonnegative matrices form a superclass of nonsingular M-matrices. For these we know that lower bounds~\eqref{eq:rumpbound} collapse to equality. We can however use inverse nonnegativity of matrices and compute the radius directly as:
\begin{align*}
\rr(A)=\frac{1}{e^TA^{-1}e}.
\end{align*}

The lower bound~\eqref{eq:rumpbound} is attained exactly as long as 
$$
\rho(|A^{-1}|\Delta)
=\max_{y,z\in\{\pm1\}^n}\rho_0(A^{-1}D_y\Delta D_z).
$$
This is the case if $D_zA^{-1}D_y\geq0$ for some $y,z\in\{\pm1\}^n$; see for example \cite{Roh2012a}. There are several classes of matrices satisfying this property. If $A$ is totally positive, then 
\begin{align*}
\rr(A,\Delta)
 = \frac{1}{\rho(|A^{-1}|\Delta)}
 = \frac{1}{\rho(D_sA^{-1}D_s\Delta)}.
\end{align*}
If $A$ is inverse nonnegative (e.g., an M-matrix), then 
\begin{align}\label{eq:invnonneg}
\rr(A,\Delta)
 = \frac{1}{\rho(|A^{-1}|\Delta)}
 = \frac{1}{\rho(A^{-1}\Delta)}.
\end{align}

This is again a high simplification motivating also further search within the class of P-matrices. Along with the mentioned simple cases this class contains also some hard instances. According to Poljak and Rohn~\cite{PolRoh1993, Roh1996} a problem to decide $d(A) \leq 1$ for a nonnegative symmetric positive definite relational matrix $A$ is NP-hard. This suggest that class of P-matrices exhibits some nice dichotomy from the viewpoint of regularity radius determination complexity. Considering these results we can ask the following question.

\begin{question} Find a subclass of P-matrices for which the determination of the regularity radius is the same as its determination for a general matrix.
\end{question}

\subsection{Rank one radius matrix}\label{s:rankone}

Suppose that $\Delta=uv^T>0$. Then the interval matrix $\imace{A}_{\delta\Delta}=[A-\delta uv^T,A+\delta uv^T]$ is regular if and only if the scaled interval matrix $[D_u^{-1}AD_v^{-1}-\delta ee^T,D_u^{-1}AD_v^{-1}+\delta ee^T]$ is regular. Hence we can reduce the computation of a general regularity radius, given by equation~\eqref{eq:genregrad}, to computation of the simpler one, given by equation~\eqref{eq:regrad}, as follows
\begin{align*}
\rr(A,uv^T) = \rr(D_u^{-1}AD_v^{-1}).
\end{align*}
For a similar results (without a proof) see Rohn~\cite{Roh2012b}.

\subsection{Approximation algorithm by rank one radius matrices}\label{s:rankoneapprox}

In this section we provide design of an approximate algorithm for regularity radius when $\Delta$ is a general radius matrix. Let $\Delta=U\Sigma V^T$ be SVD decomposition with singular values $\sigma_1,\dots,\sigma_r$. Define the rank one matrix $\Rad{B}:=U_{*1}\sigma_1 (V_{*1})^T$. By the construction of SVD we have  $\Rad{B}\geq0$. 
In some sense, $\Rad{B}$ is the best rank one approximation of $\Delta$, so $\rr(A,\Rad{B})$ approximates $\rr(A,\Delta)$. 

To evaluate quality of this approximation we can use the following procedure. Find maximal $\alpha$ and minimal $\beta$ such that $\alpha ee^T\leq\Delta\leq \beta ee^T$.
Then by simple application of some basic properties of regularity radius such as property in equation~\eqref{eq:rad_mult_constant} provides
\begin{align*}
\frac{1}{\beta} \rr(A,ee^T)
\leq \rr(A,\Delta)
\leq \frac{1}{\alpha} \rr(A,ee^T).
\end{align*}
So the quality of approximation is given by the ratio $\beta:\alpha$.

\begin{question} Evaluate quality of approximation and compare it to alternative solutions such as~\cite{Hartman2016}.
\end{question}

\begin{question} Is it possible to extend this approach using $uv^T\geq0$ instead of $ee^T$? What $u,v$ are the best?
\end{question}

\section{Finiteness of the radius}\label{s:finiteness}

For the regularity radius we have discussed several bounds as well as approximation algorithms. In many cases, especially for bounds, we have assumed that its value is finite. Unfortunately, its value is not necessarily finite as shown by following simple example.

\begin{example}\label{exInf}
Consider
\begin{align*}
A=\begin{pmatrix}1&1&1\\1&1&2\\1&2&3\end{pmatrix},\quad
\Delta=\begin{pmatrix}0&0&0\\0&0&0\\0&0&1\end{pmatrix}.
\end{align*}
Then any matrix $A'\in\imace{A}_{\delta\Delta}$ is nonsingular for every $\delta\geq0$ since $\det(A')=-1$ is constant. Therefore $\rr(A,\Delta)=\infty$.
\end{example}

This situation is of course a very special case. We can, however, check unboundedness of the radius in polynomial time, see the following theorem.

\begin{theorem}\label{thmFin}
Checking whether $\rr(A,\Delta)=\infty$ is a polynomial problem.
\end{theorem}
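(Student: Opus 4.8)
The plan is to characterize the condition $\rr(A,\Delta)=\infty$ algebraically and then show that this characterization can be verified in polynomial time. The key observation is that $\rr(A,\Delta)=\infty$ means the interval matrix $\imace{A}_{\delta\Delta}$ remains regular for \emph{every} $\delta\geq0$, which is equivalent to saying that no matrix of the form $A+E$, with $|E_{ij}|\leq\delta\Delta_{ij}$ for some finite $\delta$, is singular. Since $\delta$ can be arbitrarily large, this amounts to the requirement that every matrix obtained by perturbing $A$ \emph{arbitrarily} within the support (zero/nonzero pattern) of $\Delta$ is nonsingular. In other words, $\rr(A,\Delta)<\infty$ if and only if there exists a singular matrix whose deviation from $A$ is supported only on the nonzero entries of $\Delta$.

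First I would make the support pattern explicit. Let $S=\{(i,j): \Delta_{ij}>0\}$ be the set of positions that may be perturbed and let its complement fix the corresponding entries of $A$. The determinant $\det(A+E)$, viewed as a function of the free variables $\{E_{ij}\}_{(i,j)\in S}$, is a multilinear polynomial $p$ in those variables. The radius is finite precisely when $p$ attains the value $-\det(A)$ (equivalently, when $\det(A+E)=0$ is achievable), i.e.\ when $p$ is not a nonzero constant. Thus $\rr(A,\Delta)=\infty$ if and only if the polynomial $\det(A+E)$ does not depend on any of the free variables and equals a nonzero constant. This reduces the finiteness question to deciding whether this multilinear determinant polynomial is constant and nonzero, which is exactly what Example~\ref{exInf} illustrates.

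The main step is then to decide constancy of $p$ efficiently. Because $p$ is multilinear in the entries indexed by $S$, it is constant in a particular variable $E_{ij}$ precisely when the cofactor of position $(i,j)$ vanishes; and $p$ is a nonzero constant overall exactly when every such cofactor (for $(i,j)\in S$) is zero while $\det(A)\neq0$. Each cofactor is the determinant of an $(n-1)\times(n-1)$ submatrix and is computable in polynomial time, and there are at most $n^2$ positions in $S$, so the whole test runs in polynomial time. I would therefore verify $\det(A)\neq0$ together with $\partial p/\partial E_{ij}=0$ for all $(i,j)\in S$; if all these cofactors vanish and $A$ is nonsingular, the radius is infinite, otherwise it is finite.

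The hard part will be handling the multilinearity carefully: when several free entries share a row or column, the partial derivative $\partial p/\partial E_{ij}$ is itself a cofactor that may still involve other free variables, so one must argue that vanishing of the \emph{full} set of first-order cofactors (as polynomials, not merely at $E=0$) already forces $p$ to be constant, rather than merely having zero gradient at the origin. The clean way to see this is to note that a multilinear polynomial is constant iff each of its first partial derivatives is identically the zero polynomial, and each such derivative is a determinant of a matrix with one entry removed, whose own dependence on the remaining free variables can be tested recursively or, more simply, collapsed by observing that setting the perturbed positions to symbolic indeterminates and evaluating the cofactor as a polynomial is itself a polynomial-size computation. I expect the bulk of the write-up to consist in making this determinant/cofactor argument rigorous and confirming that all intermediate objects have polynomial bit-size.
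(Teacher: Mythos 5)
Your reduction of finiteness to a determinant statement is correct, and the first half of your argument is sound: with $S=\{(i,j):\Delta_{ij}>0\}$ one has $\rr(A,\Delta)<\infty$ iff some matrix $A+E$ with $E$ supported on $S$ is singular, and since $p(E)=\det(A+E)$ is multilinear, a non-constant $p$ always attains a real zero (fix the other variables so that the coefficient of some variable on which $p$ depends is nonzero, then solve for that variable); hence $\rr(A,\Delta)=\infty$ iff $p$ is a nonzero constant. The genuine gap is the algorithmic half, exactly the part you deferred: deciding whether $p$ is constant. The correct criterion is that every $\partial p/\partial E_{ij}$, $(i,j)\in S$, vanishes \emph{identically as a polynomial} in the remaining variables, and this is not a numeric determinant computation. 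Each such cofactor is a symbolic determinant of a matrix mixing constants with distinct indeterminates; it can have exponentially many monomials, so your claim that ``evaluating the cofactor as a polynomial is itself a polynomial-size computation'' is false, and deciding whether it vanishes identically is a polynomial identity testing problem. The recursive fallback fails too: ``cofactor $\equiv0$'' unwinds to ``cofactor constant \emph{and} zero at $E=0$,'' and the constancy test branches again over all variable positions of the submatrix; the subproblems are indexed by arbitrary row/column subsets, of which there are exponentially many, so the recursion is not polynomial. The statement you need is in fact true, but the known routes are either randomized evaluation (Schwartz--Zippel, which only gives a randomized algorithm, not the deterministic claim of the theorem) or Murota's theory of mixed matrices, which computes the generic rank of $A+E$ via matroid intersection in deterministic polynomial time---substantial machinery that you neither cite nor reprove.

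For contrast, the paper's proof avoids polynomials entirely. It uses the characterization that $[A-\delta\Delta,A+\delta\Delta]$ is singular for some finite $\delta$ iff the system $|Ax|\leq\delta\Delta|x|$ has a nontrivial solution for some $\delta$, i.e., iff there exists $x\neq0$ with $A_{i*}x=0$ for every row $i$ in which $\Delta_{i*}|x|$ vanishes. This is decided by an elementary elimination procedure: start with the rows where $\Delta_{i*}=0$, form the subspace they annihilate, zero out entries $\Delta_{ij}$ whose column variable $x_j$ is forced to vanish on that subspace, and iterate; the radius is infinite iff eventually all rows are absorbed. Each round is plain linear algebra and there are at most $n^2$ rounds, which is what makes the polynomial bound immediate---precisely the step your approach cannot supply without heavy identity-testing tools.
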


\begin{proof}
Without loss of generality assume $A$ is nonsingular, that is, $\rr(A,\Delta)\not=0$. Consider the interval matrix $\imace{A}_{\Delta}:=[A-\Delta,A+\Delta]$. It is regular iff the system
\begin{align}\label{sysOP}
|Ax|\leq\Delta|x|
\end{align}
has only trivial solution $x=0$.
Let $I$ be an index set of those $k$ for which $\Delta_{k*}=0$. For such a $k$ the $k$th inequality in the above system reads $|A_{k*}x|\leq0$, from which $A_{k*}x=0$. Denote by $\mna{P}$ the vector space defined by $A_{k*}x=0$, $k\in I$.

If there are $i,j$ such that $\Delta_{ij}>0$ and $x_j=0$ for every $x\in\mna{P}$, then put $\Delta_{ij}:=0$. If $\Delta_{i*}=0$, then put $I:=I\cup\{i\}$ and update $\mna{P}$ accordingly. Repeat this process until no change happens.

We claim that $\rr(A,\Delta)=\infty$ iff $I=\seznam{n}$.

\quo{$\Leftarrow$}
This is obvious since nonsingularity of $A$ implies $\mna{P}=\{0\}$, so \nref{sysOP} has only trivial solution.

\quo{$\Rightarrow$}
For every $i\not\in I$ choose $j_i$ such that $\Delta_{ij_i}>0$. We want to show solvability of the system
\begin{align}
\label{eqPfThmRinf1}
x_{j_i}&\not=0,\quad i\not\in I,\\
\label{eqPfThmRinf2}
A_{i*}x&=0,\quad i\in I
\end{align}
because it solution also solves \nref{sysOP} for a sufficiently large multiple of $\Delta$. First, we remove redundant constraints in \nref{eqPfThmRinf1} such that there are no $i,i'\not\in I$ satisfying $i\not=i'$ and $j_i=j_{i'}$. Denote the corresponding index set by $I'$. Thus, we want to find $y_i\not=0$, $i\in I'$, such that the system
\begin{align*}
x_{j_i}&=y_i,\quad i\in I',\\
A_{i*}x&=0,\quad i\in I
\end{align*}
is solvable. Infeasibility of this system occurs only if there are some linearly dependent rows in the constraint matrix. Since the first rows indexed by $I'$ are linearly independent, the only possibility is that some of the remaining rows are linearly dependent on the rows above them. Thus, to achieve solvability of the system, consider $y_i$s as unknowns, and the linear dependencies give rise to the system
$
By=0.
$
Notice that the solution set of this system is nontrivial and does not lie in any hyperplane of the form $y_i=0$, $i\in I'$: Otherwise $x_{j_i}=0$ is a consequence of \nref{eqPfThmRinf2}, which is a contradiction. In this case, however, the system $By=0$ has a solution $y$ with no zero entry, which completes the proof.
\end{proof}

In Example~\ref{exInf} we saw a matrix $A$ with infinite regularity radius and $\Delta$ having only one nonzero entry. So the natural question arises: How relates the number and positioning of nonzero entries of $\Delta$ with finiteness of $\rr(A,\Delta)$?

Obviously,  $\rr(A,\Delta)<\infty$ provided $\Delta>0$. Rohn~\cite[Thm.~83(ii)]{Roh2012b} proposed a stronger result (without a proof). For the sake of completeness, we present it here with a proof. 

\begin{proposition}[Rohn, 2012]
If $\Delta e>0$ or $\Delta^T e>0$, then $\rr(A,\Delta)<\infty$.
\end{proposition}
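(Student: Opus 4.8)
The plan is to reduce finiteness of $\rr(A,\Delta)$ to the solvability analysis of the system $|Ax|\le\Delta|x|$ already set up in the proof of Theorem~\ref{thmFin}, and then exploit the row-sum positivity hypothesis to force a zero pattern on any solution. Recall that $\imace{A}_{\Delta}$ is regular iff \nref{sysOP} has only the trivial solution, and, by the scaling property \eqref{eq:rad_mult_constant}, $\rr(A,\Delta)<\infty$ holds exactly when $\imace{A}_{t\Delta}$ is singular for some finite $t>0$, equivalently when $|Ax|\le t\Delta|x|$ has a nontrivial solution for some $t$. So the whole task is to produce, under the assumption $\Delta e>0$, a nonzero $x$ satisfying $|Ax|\le t\Delta|x|$ for $t$ large enough; the symmetric case $\Delta^T e>0$ then follows by applying the result to $A^T$, since $\rr(A,\Delta)=\rr(A^T,\Delta^T)$ by \eqref{eqRdel} (transposition permutes the roles of $D_y,D_z$ and preserves the real spectral radius).

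First I would observe that $\Delta e>0$ means every row of $\Delta$ contains a strictly positive entry, i.e.\ no row of $\Delta$ is identically zero. Going back to the machinery of the previous proof, this says the index set $I$ of all-zero rows of $\Delta$ is empty at the outset. I would then argue that the fixed-point reduction in Theorem~\ref{thmFin} cannot enlarge $I$ to all of $\seznam{n}$: the reduction only moves an index $i$ into $I$ after it has zeroed out every positive entry of row $i$, but an all-zero row can never re-arise here because the column-zeroing step requires $x_j=0$ for every $x\in\mna{P}$, and with $I=\emptyset$ we have $\mna{P}=\R^n$, so no coordinate is forced to vanish. Hence no entry of $\Delta$ is ever set to zero, the process halts immediately with $I=\emptyset\ne\seznam{n}$, and the characterization $\rr(A,\Delta)=\infty\iff I=\seznam{n}$ from Theorem~\ref{thmFin} gives $\rr(A,\Delta)<\infty$ directly.

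Alternatively, and perhaps more transparently for a self-contained proof, I would give a direct construction. Choose any $x$ with all coordinates nonzero, say $x=e$. Then $\Delta|x|=\Delta e>0$ componentwise, so $\min_i(\Delta e)_i=:\mu>0$. Since $|Ae|$ is a fixed finite vector, picking $t\ge \max_i|Ae|_i/\mu$ yields $|Ae|\le t\,\Delta e=t\,\Delta|e|$ componentwise. Thus $x=e$ is a nontrivial solution of $|Ax|\le t\Delta|x|$, which certifies that $\imace{A}_{t\Delta}$ is singular, hence $\rr(A,\Delta)\le t<\infty$. For the hypothesis $\Delta^T e>0$ I would transpose: singularity of $\imace{A}_{t\Delta}$ is equivalent to singularity of $\imace{A^T}_{t\Delta^T}$, and $\Delta^T e>0$ is the row-sum condition for $\Delta^T$, so the same construction applied to $A^T,\Delta^T$ finishes the case.

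The main obstacle I anticipate is not in the positivity estimate, which is routine, but in making the reduction to the condition $|Ax|\le t\Delta|x|$ airtight regarding \emph{which} $t$ suffices and ensuring the chosen $x$ genuinely has $|x|$ with no forced cancellation against zero columns of $\Delta$. The direct construction with $x=e$ sidesteps this cleanly because $\Delta e>0$ guarantees every coordinate of $\Delta|e|$ is strictly positive, leaving room to absorb $|Ae|$ by scaling $t$; the only subtlety is confirming that the interval-singularity criterion \nref{sysOP} is indeed scale-covariant in $\Delta$ so that a solution of $|Ax|\le t\Delta|x|$ witnesses singularity of $\imace{A}_{t\Delta}$ rather than of $\imace{A}_{\Delta}$, which is immediate from the definition of $\imace{A}_{t\Delta}=[A-t\Delta,A+t\Delta]$.
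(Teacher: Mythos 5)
Your ``direct construction'' is precisely the paper's proof: the paper also takes $x:=e$, observes that $\Delta e>0$ gives $|Ae|\leq\alpha\Delta e$ for sufficiently large $\alpha>0$, concludes via the criterion \nref{sysOP} that $\imace{A}_{\alpha\Delta}$ contains a singular matrix so that $\rr(A,\Delta)\leq\alpha<\infty$, and settles the case $\Delta^T e>0$ by transposition. Your first route through the index-set machinery of Theorem~\ref{thmFin} is a valid alternative (with the minor caveat that one should dispose of singular $A$ separately, as that theorem's proof assumes $A$ nonsingular), but the self-contained argument you prefer is essentially identical to the paper's own proof.
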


\begin{proof}
If $\Delta e>0$, then
\begin{align*}
|Ae|\leq\alpha\Delta e
\end{align*}
for a sufficiently large $\alpha>0$. Thus $x:=e$ is a solution of system \nref{sysOP}, meaning that there is a singular matrix in $\imace{A}_{\alpha\Delta}$.

Case $\Delta^T e>0$ is trivial by matrix transposition. 
\end{proof}

On the other hand, it may happen that $\rr(A,\Delta)=\infty$ even if $\Delta$ has many nonzero entries. Consider the example, where $A$ is the identity matrix of size $n$, and the radius matrix is defined as $\delta_{ij}=1$ if $i<j$ and $\delta_{ij}=0$ otherwise.
Then $\rr(A,\Delta)=\infty$ even though $\Delta$ has $\binom{n}{2}$ nonzero entries. We show that this is the maximum posible number of nonzero entries.

\begin{proposition} 
The maximal number of nonzero entries of $\Delta$ such that $\rr(A,\Delta)=\infty$ is $\binom{n}{2}$.
\end{proposition}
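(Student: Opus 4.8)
The plan is to prove both the upper bound (no $\Delta$ with more than $\binom{n}{2}$ nonzero entries can yield $\rr(A,\Delta)=\infty$) and the matching lower bound (the construction already exhibited achieves $\binom{n}{2}$), so that $\binom{n}{2}$ is indeed the maximum. The lower bound is handed to us by the example immediately preceding the statement: taking $A$ to be the identity and $\Delta$ strictly upper triangular gives $\binom{n}{2}$ nonzero entries with $\rr(A,\Delta)=\infty$. So the real content is the upper bound, and I would phrase it as: if $\Delta$ has at least $\binom{n}{2}+1$ nonzero entries, then $\rr(A,\Delta)<\infty$ for every nonsingular $A$.

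The key tool is the characterization established in the proof of Theorem~\ref{thmFin}: $\rr(A,\Delta)=\infty$ (with $A$ nonsingular) if and only if the termination index set $I$ equals $\seznam{n}$, which in turn forces the system \nref{sysOP} to have only the trivial solution. I would instead argue contrapositively through the combinatorial structure of the zero pattern. Think of the nonzero positions of $\Delta$ as a directed graph (or bipartite pattern) on row- and column-indices. The strictly-upper-triangular pattern has exactly $\binom{n}{2}$ ones and the crucial feature that it contains no position on or below the diagonal; the claim is that once the pattern is "too dense," one can always find a nonzero vector $x$ solving $|Ax|\leq\alpha\Delta|x|$ for some $\alpha$, i.e.\ a singular matrix appears. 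The natural quantitative statement is a Turán-type counting fact: a zero/one matrix with more than $\binom{n}{2}$ ones must contain a nonzero entry in every "transversal-blocking" position, or more precisely cannot be simultaneously permuted into a pattern supporting the finiteness obstruction.

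The main step I would carry out is to show that $\rr(A,\Delta)=\infty$ requires $\Delta$ to have a zero in a diagonal position after some permutation, equivalently that the support pattern of $\Delta$ cannot contain a full permutation's worth of entries together with enough extra structure. Concretely, if $\Delta$ has at least $\binom{n}{2}+1$ nonzero entries, I would argue that the support must contain a pair of positions $(i,j)$ and $(j,i)$ with $i\neq j$, or a diagonal entry $(i,i)$ — a symmetric or diagonal occupancy — because the largest pattern avoiding all such symmetric/diagonal pairs is exactly a tournament-type set of size $\binom{n}{2}$. From such an occupied position one directly builds a solution of \nref{sysOP}: a single nonzero coordinate $x_j$ (when $\Delta_{jj}>0$) or a two-coordinate vector (when both $\Delta_{ij}>0$ and $\Delta_{ji}>0$) can be scaled, by taking $\alpha$ large, to satisfy $|Ax|\leq\alpha\Delta|x|$, yielding singularity and hence $\rr(A,\Delta)<\infty$.

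The hard part will be nailing down the combinatorial extremal claim that $\binom{n}{2}$ is exactly the threshold, i.e.\ identifying precisely which occupancy patterns are compatible with an infinite radius and proving that their maximum size is $\binom{n}{2}$. This requires reconciling the graph-theoretic counting (maximum antisymmetric-and-diagonal-free support) with the linear-algebraic finiteness criterion from Theorem~\ref{thmFin}, and in particular verifying that the reduction process defining $I$ in that proof cannot "save" a denser pattern by exploiting a special nonsingular $A$. I expect the cleanest route is to prove the upper bound uniformly over all nonsingular $A$ by the solvability argument above, and the lower bound via the explicit identity-matrix construction, so that the two bounds meet at $\binom{n}{2}$.
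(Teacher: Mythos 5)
Your lower-bound half is fine and matches the paper: the identity matrix with a strictly upper triangular $\Delta$ attains $\binom{n}{2}$. The upper bound, however, rests on a claim that is false, namely that whenever the support of $\Delta$ contains a diagonal position $(j,j)$ or a symmetric pair $(i,j),(j,i)$, then $\rr(A,\Delta)<\infty$ for every nonsingular $A$. The paper's own Example~\ref{exInf} refutes the diagonal case: there $\Delta_{33}=1>0$, yet $\rr(A,\Delta)=\infty$. The symmetric case fails as well: take
$A=\left(\begin{smallmatrix}0&0&1\\0&1&0\\1&0&0\end{smallmatrix}\right)$
and $\Delta$ with $\Delta_{23}=\Delta_{32}=1$ and zeros elsewhere; row $1$ of \nref{sysOP} forces $x_3=0$, then row $2$ forces $x_2=0$, then row $3$ forces $x_1=0$, so the system has only the trivial solution for every multiple of $\Delta$ and the radius is infinite. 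The underlying error is in your ``directly builds a solution'' step: a vector $x$ supported on $\{j\}$ or on $\{i,j\}$, no matter how large you take $\alpha$, must still satisfy the \emph{exact} equations $A_{k*}x=0$ for every row $k$ with $(\Delta|x|)_k=0$, and a one- or two-coordinate vector has no reason to satisfy these $n-1$ (or $n-2$) equations for a general $A$. Consequently, finiteness of $\rr(A,\Delta)$ is not a property of the support pattern of $\Delta$ alone, and no purely Tur\'{a}n/tournament-type counting over supports can close the upper bound.

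What the paper does instead (and what your argument is missing) is a dimension count threaded through the elimination procedure of Theorem~\ref{thmFin}, which characterizes $\rr(A,\Delta)=\infty$ (for nonsingular $A$) by the index set $I$ reaching $\seznam{n}$. A row of $\Delta$ can enter $I$ only if all its positive entries sit in columns $j$ with $x_j=0$ on the current subspace $\mna{P}$; since $\dim\mna{P}=n-|I|$, there are at most $|I|$ such columns at that moment. Hence the rows entering $I$ first, second, \dots, last carry at most $0,1,\dots,n-1$ positive entries respectively, giving the bound $0+1+\dots+(n-1)=\binom{n}{2}$. The count is inherently dynamic — it bounds each row's entries relative to when that row is eliminated — rather than a static statement about which positions of $\Delta$ are occupied, which is exactly the distinction your approach collapses.
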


\begin{proof} 
Let $A\in\R^{n\times n}$ and $\Delta\geq0$ such that $\rr(A,\Delta)=\infty$. We have to show that the number of positive entries in $\Delta$ is at most $\binom{n}{2}$.
Consider system \nref{sysOP}. At the beginning of the procedure described in the proof of Theorem~\ref{thmFin}, we have at least one $k$ such that $\Delta_{k*}=0$, so that $|I|\geq1$. If $I\not=\seznam{n}$, there must be $i,j$ such that $\Delta_{ij}>0$ and $x_j=0$ for every $x\in\mna{P}$.
Since $\dim\mna{P}=n-|I|$, there can be at most $I$ such $j$s. This holds during the iterations.  However, if we already put some index $i'$ to $I$ in the previous iteration, the sub-space $\mna{P}$ is lying in some $x_{j'}=0$, and thus the number of novel indices $j$ is decreased accordingly. Therefore, the worst case is that at  each iteration, we add just one index $i$ to $I$, and $\Delta_{kj}>0$ for every  $k\not\in I$. The total number of positive entries of $\Delta$ then reads $0+1+2+\dots+(n-1)=\binom{n}{2}$.
\end{proof}

\section{Bounding the regularity radius using spectral norm}\label{s:bounds}

Due to equivalence of matrix norms \cite{HorJoh1985}, we have for the maximum (Chebyshev) and spectral norms of a matrix $A\in\R^{n\times n}$
\begin{align*}
\|A\|_{\max} \leq \|A\|_2 \leq n\|A\|_{\max}.
\end{align*}
Since the distance of $A$ to the nearest singular matrix in the spectral norm is equal to the minimum singular value $\sigma_{\min}(A)$, we obtain the bounds
\begin{align*}
\frac{1}{n}\sigma_{\min}(A) \leq \rr(A) \leq \sigma_{\min}(A).
\end{align*}
Even more, based on the properties of singular values, we propose another upper bound for $\rr(A)$, which is sometimes attained as the exact value. 


\begin{lemma}\label{lmmSingForm}
A nearest singular matrix to $A$ in the maximum norm has the form of $A-\rr(A)yz^T$ for some $y,z\in\{\pm1\}^n$.
\end{lemma}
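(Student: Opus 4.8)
The plan is to exploit the fact that $\rr(A)$ is exactly the distance, measured in $\|\cdot\|_{\max}$, from $A$ to the set of singular matrices: a matrix $B$ lies in $\imace{A}_{\delta}=[A-\delta ee^T,A+\delta ee^T]$ iff $\|B-A\|_{\max}\leq\delta$, so $\rr(A)=\min\{\|A-S\|_{\max}\mmid S\text{ singular}\}$, and this minimum is attained because the singular matrices form a closed nonempty set. Hence it suffices to exhibit \emph{one} singular matrix of the prescribed shape $A-\rr(A)yz^T$ lying at distance exactly $\rr(A)$; any such matrix is automatically a nearest one. I may assume $A$ is nonsingular with $\rr(A)<\infty$, since otherwise $A$ itself is singular, $\rr(A)=0$, and the claim holds with arbitrary signs.

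The construction reads off the two sign vectors from the Poljak--Rohn formula $\rr(A)=1/\|A^{-1}\|_{\infty,1}$ together with the combinatorial description of the norm in \eqref{eq:rnorm}. First I would pick $z^*\in\{\pm1\}^n$ attaining the maximum in $\|A^{-1}\|_{\infty,1}=\max_{z\in\{\pm1\}^n}\|A^{-1}z\|_1$, and set $x:=A^{-1}z^*$. By the choice of $z^*$ we have $\|x\|_1=\|A^{-1}\|_{\infty,1}=1/\rr(A)$, and by construction $Ax=z^*$; note $x\neq0$ since $z^*\neq0$ and $A$ is invertible. The candidate perturbation is then $E:=\rr(A)\,yz^T$ with $y:=z^*$ and $z:=\sgn(x)$, where for any vanishing entry of $x$ the corresponding entry of $z$ is fixed to $+1$ (its value is immaterial below).

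It remains to verify the two required properties. Since every entry of $E$ equals $\pm\rr(A)$, we get $\|E\|_{\max}=\rr(A)$. For singularity, observe $z^Tx=\sum_j\sgn(x_j)x_j=\|x\|_1$, hence $Ex=\rr(A)\,y\,(z^Tx)=\rr(A)\,z^*\,\|x\|_1=z^*=Ax$, using $\rr(A)\|x\|_1=1$. Therefore $(A-E)x=0$ with $x\neq0$, so $A-E=A-\rr(A)yz^T$ is singular and attains $\|A-(A-E)\|_{\max}=\rr(A)$, which makes it a nearest singular matrix of the required form. The only genuine idea is recognizing that the optimal sign vector of the $\|\cdot\|_{\infty,1}$ norm supplies $y$ while the sign pattern of $x=A^{-1}z^*$ supplies $z$; the one technical point to watch is the handling of zero entries of $x$, which is harmless precisely because such coordinates contribute nothing to $z^Tx$.
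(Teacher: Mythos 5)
Your proof is correct, and it reaches the conclusion by a genuinely different (and more elementary) route than the paper. The paper works from the general Poljak--Rohn eigenvalue formula \eqref{eqRdel}: taking a maximizing sign pair $(y,z)$, it observes that $1/\rr(A)$ is a real eigenvalue of $A^{-1}D_y\Delta D_z$, and the associated eigenvector witnesses singularity of $A-\rr(A)D_y\Delta D_z$, which equals $A-\rr(A)yz^T$ once $\Delta=ee^T$. You instead start from the specialized norm formula $\rr(A)=1/\|A^{-1}\|_{\infty,1}$ together with \eqref{eq:rnorm}, and build the kernel vector by hand: $x=A^{-1}z^*$ for a norm-maximizing $z^*$, with the duality identity $\sgn(x)^Tx=\|x\|_1=1/\rr(A)$ doing the work. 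The two routes are closely related: for $\Delta=ee^T$ the matrix $A^{-1}yz^T$ has rank one with sole nonzero eigenvalue $z^TA^{-1}y$, so maximizing $\rho_0$ in \eqref{eqRdel} amounts exactly to maximizing the bilinear form that defines $\|A^{-1}\|_{\infty,1}$; in substance you have unpacked the paper's spectral step into an explicit computation. What your route buys: (i) no appeal to attainment of $\rho_0$ by an actual eigenvalue, and no sign subtlety (the paper tacitly assumes the extremal real eigenvalue is $+1/\rr(A)$ rather than $-1/\rr(A)$; in the latter case one must replace $y$ by $-y$, which the paper's proof glosses over); (ii) you make explicit why the exhibited matrix is genuinely \emph{nearest} --- its max-norm distance is exactly $\rr(A)$, and closedness of the set of singular matrices guarantees attainment and that no singular matrix is closer --- a point the paper leaves implicit; (iii) clean handling of the degenerate cases (singular $A$, zero entries of $x$). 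What the paper's route buys: it is shorter, and its core argument works verbatim for an arbitrary radius matrix $\Delta\geq0$, producing a singular matrix of the form $A-\rr(A,\Delta)D_y\Delta D_z$, whereas your norm-based construction is tied to the case $\Delta=ee^T$.
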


\begin{proof}
Let $y,z\in\{\pm1\}^n$ be the maximizers of the formula \nref{eqRdel}. Then 
$
A^{-1}D_y\Delta D_z-\frac{1}{\rr(A)}I_n
$
is singular, so there is $x\not=0$ such that
$$
\left(A^{-1}D_y\Delta D_z-\frac{1}{\rr(A)}I_n\right) x =0.
$$
From this we derive
$$
\rr(A) D_y\Delta D_z x= A x,
$$
whence $A-\rr(A) D_y\Delta D_z$ is singular. Moreover, since $\Delta=ee^T$, we have $D_y\Delta D_z=D_yee^T D_z=yz^T$.
\end{proof}

Let $u$ and $v$ be the left and right singular vectors, respectively, corresponding to $\sigma_{\min}(A)$. Then $A-\sigma_{\min}(A)uv^T$ is a singular matrix, which is nearest to $A$ in the spectral norm. By Lemma~\ref{lmmSingForm}, a nearest matrix to $A$ in the maximum norm has the form of $A-\rr(A)yz^T$ for some $y,z\in\{\pm1\}^n$. Therefore, it is natural to set $y:=\sgn(u)$ and $z:=\sgn(v)$ and derive the upper bound based on \nref{eqRdel} and the proof of  Lemma~\ref{lmmSingForm}
\begin{align*}
\rr(A)
\leq \frac{1}{\rho_0(A^{-1}yz^T)}.
\end{align*}


\section{Not a priori exponential method}\label{s:ourmethod}

In this section, we describe a not a priori exponential method for computing the regularity radius $\rr(A,\Delta)$. It is based on the Jansson--Rohn \cite{JanRoh1999} algorithm for testing regularity of interval matrices. The advantage is that it is not a priori exponential, meaning that it may take exponential number of steps, but often it terminates earlier.

Put $b:=Ae$ and consider the interval linear system of equations $\imace{A}_{\delta}x=b$. The solution set is defined as
\begin{align*}
\{x\in\R^n\mmid \exists A\in\imace{A}_{\delta}: Ax=b\}.
\end{align*}
Since the solution set is non-empty, it is either a bounded connected polyhedron (in case of regular $\imace{A}_{\delta}$), or each topologically connected component of the solution set is unbounded (in case of singular $\imace{A}_{\delta}$); see Jansson \cite{Jan1997}. This is the underlying idea of the Jansson--Rohn algorithm for checking regularity of $\imace{A}_{\delta}$: Start within an orthant, containing some solution and check if the solution set in this orthant is bounded. If yes, explore recursively the neighboring orthants intersecting the solution set. Continue until you process the whole connected component.

Our situation is worse since we have to find the minimal $\delta\geq0$ such that $\imace{A}_{\delta}$ is singular. We suggest the following method. Notice that an orthant is characterized by a sign vector $s\in\{\pm1\}^n$ and described by $\diag(s)x\geq0$.
\begin{enumerate}
\item
Start in the non-negative orthant since $x=e$ is a solution. 
\item\label{step2}
Find minimal $\delta_e$ such that the corresponding solution set in this orthant is unbounded. 
\item\label{step3}
For each of the $n$ neighboring orthants find minimal $\delta_s$ such that the corresponding solution set intersects the orthant $\diag(s)x\geq0$.
\item
If $\delta_e\leq\delta_s$ for all neighboring orthants $s$, then we are done and $\rr(A,\Delta)=\delta_e$. Otherwise choose the minimal $\delta_s$, move to the neighboring orthant $s$ and repeat the process.
\end{enumerate}

Herein, the question is how to perform steps~\ref{step2} and ~\ref{step3}. 
According to the Oettli-Prager theorem \cite{OetPra1964}, the part of the solution set lying in the orthant $s\in\{\pm1\}^n$ is described by linear inequalities
\begin{align}\label{sysDs}
Ax-b \leq \delta \Delta \diag(s) x,\
-(Ax-b) \leq \delta \Delta \diag(s) x,\  
\diag(s) x\geq0.
\end{align}
The set is unbounded if and only if the recession cone has a non-trivial solution \cite{Schr1998}, that is, the system
\begin{align}\label{sysRecKzlDs}
Ax \leq \delta \Delta \diag(s) x,\ 
-Ax \leq \delta \Delta \diag(s) x,\ 
\diag(s) x\geq0,\ e^T\diag(s) x=1
\end{align}
is feasible. This leads us to the optimization problem
\begin{align*}
\min\ \delta \mbox{ subject to \nref{sysRecKzlDs}}
\end{align*}
to compute $\delta_e$ from step~\ref{step2}. This problem belongs to a class of generalized linear fractional programming problems, and it is solvable in polynomial time by using interior point methods \cite{FreJar1995, NesNem1995}.

Similarly we compute $\delta_s$ from step~\ref{step3} by solving the optimization problem
 \begin{align*}
\min\ \delta \mbox{ subject to \nref{sysDs}}.
\end{align*}

\section{Numerical experiments}\label{s:ex}

We have tested this approach numerically using the following settings. We have fixed various matrix sizes $N = \{3, 4, \ldots, 13\}$ and set number of instances for each size to be $M = 10$. For these settings we have generated collections of matrices $\mathcal{A}_{n,m}$, where $n \in N$ and $m \in {1, 2, \ldots, M}$. Each matrix $\mathcal{A}_{n, m} = \{a_{i,j}\}_{i,j = 1}^{n}$ is a random matrix of chosen type. For our purposes we have used three types of matrices:
\begin{enumerate}
\item {\em zero centered random matrix}, 
\item {\em random orthogonal matrix},
\item {\em (almost) inverse nonnegative matrix}.
\end{enumerate}

These matrices have been generated as follows. To generate {\em zero centered random matrix} we generated at first matrix $A$ of values generated uniformly from interval $[-5,5]$. The second matrix denoted as {\em random orthogonal} is simply generated using the previous process of generating zero centered matrices and consequent utilization of matrix Q from its QR decomposition.  The last type called here {\em (almost) inverse nonnegative} is parametric type of random matrices. Let us at first mention how to generate inverse nonnegative. These are generated as follows. We start with a matrix $B$ having its values generated uniformly from interval $[0,1]$. Then the inverse nonnegative matrix is generated simply by an inverse of matrix $B$, i.e., $A = B^{-1}$. This produces {\em inverse nonnegative matrix}. To acquire {\em almost inverse nonnegative} we have chosen randomly $d = 10$ percent of elements from inverse nonnegative matrix and negate each of them. The elements of a matrix $\Delta_{n,m}$ are generated randomly uniformly from interval $[0,1]$. 

All computations were performed in Matlab 2016b using function fminimax from Optimization Toolbox. Only one thread has been assigned to each computation to avoid effects of automatic parallelism. For each pair of generated matrices $\mathcal{A}_{n,m}$ and $\Delta_{n,m}$ we have estimated values of the regularity radius using either {\em the full search} analysis of equation~\eqref{eqRdel} producing thus a numerical estimation of the regularity radius denoted by $r_{FS}(\mathcal{A}_{n,m},\Delta_{n,m})$, or our suggested method of {\em orthant search} using the above described exploration of orthants producing thus an estimation $r_{OS}(\mathcal{A}_{n,m}, \Delta_{n,m})$. While the full search method might be expected as precise, we are interested in normalizing the difference of these two methods. For these reasons we define the following difference.
\begin{equation*}
	\delta_r = \frac{r_{OS} - r_{FS}}{r_{FS}}.
\end{equation*}

This could be computed for any values of $n$ and $m$. Since index $m$ is introduced for statistical reason, we are more interested in average value across this axis -- by $\overline{\delta}_r$ we denote an average over $m$.

\begin{figure}[htbp]
  \centering
  \includegraphics[width=14cm]{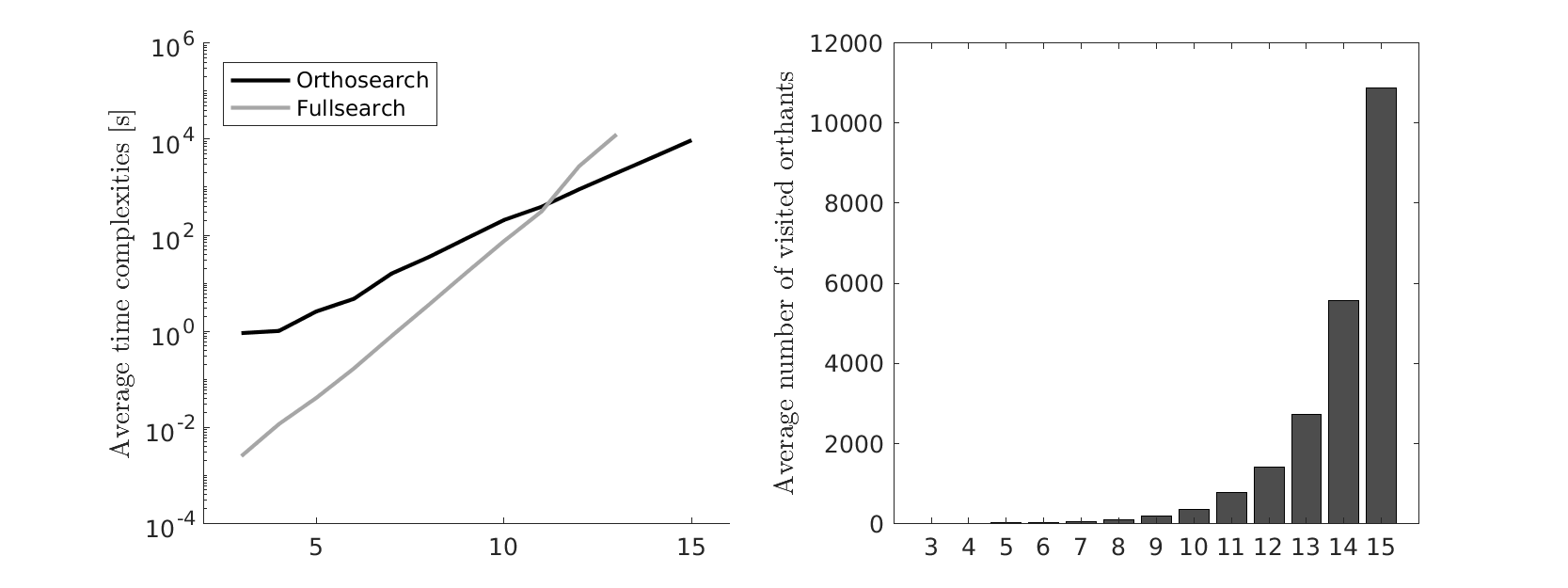}
  \caption{Results of testing estimation of the regularity radius using the full search and orthant search methods for {\em random zero centered matrices}. Presented are time complexities (left) and number of visited orthants for orthant search method (right).}
  \label{fig:zero_centered}
\end{figure}

As mentioned above, we have also recorded time consumption. Although the corresponding conditions were preset in order to maintain comparable situations, we have also decided to include an auxiliary characteristic enabling to evaluate time complexity. This characteristic is represented by the number of visited orthants for which the optimization of problem~\eqref{sysRecKzlDs} took place. For the orthant search method we consider matrices of extended sizes $N' = \{3, 4, \ldots, 15\}$ to better show the growing number of visiting orthants. This is possible due to reasonable time complexities for larger matrices.

\subsubsection*{Zero centered matrices}
At first, let us explore results of computation for {\em random zero centered matrices} as shown in Figure~\ref{fig:zero_centered}. For the full interval of matrix sizes $N$ average differences between radii $\overline{\delta}_r$  are numerically negligible.
The results suggest that our orthant search provides the same estimation compared to the full search method. At the same time its complexity is growing slower.

\subsubsection*{Orthogonal matrices}
Figure~\ref{fig:rand_orthog} presents results of second group of matrices, namely {\em random orthogonal matrices}. These have shown roughly the same behavior as the above mentioned results for zero centered matrices. This holds also for the number of visited orthants.

\begin{figure}[htbp]
  \centering
  \includegraphics[width=14cm]{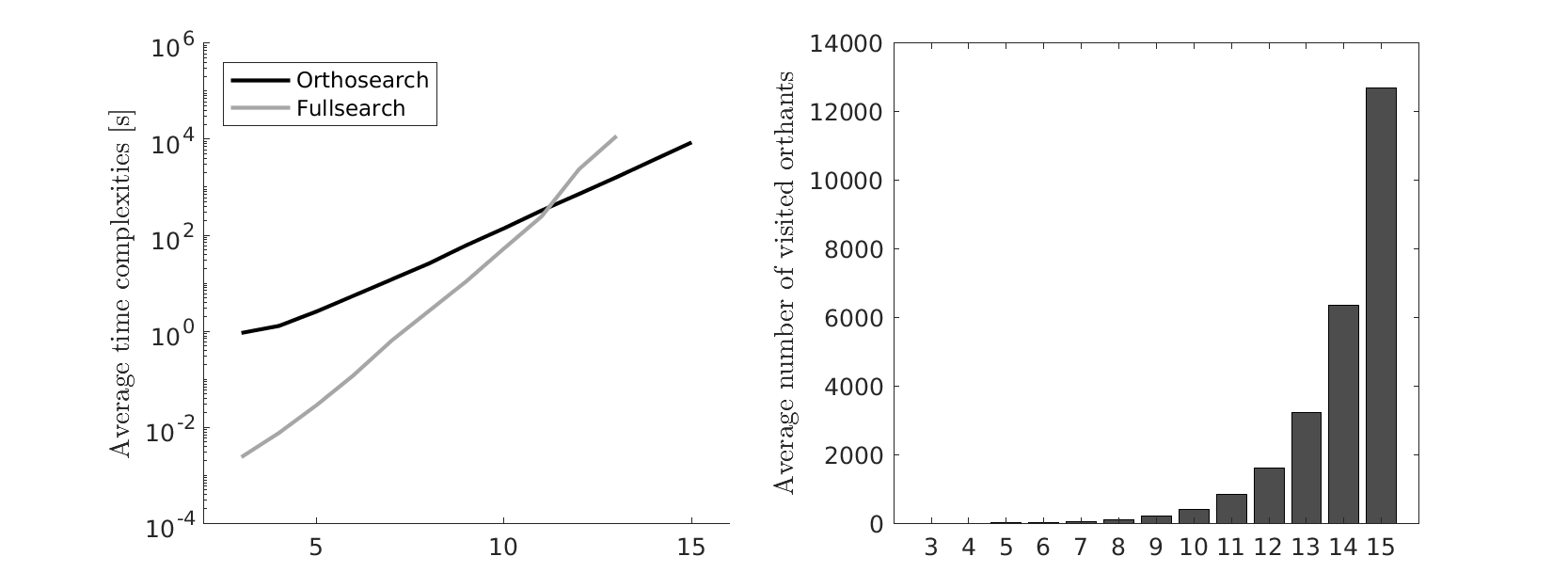}
  \caption{Results of testing estimation of the regularity radius using the full search and orthant search methods for {\em random orthogonal matrices}. Presented are differences $\overline{\delta}_r$ (left), time complexities (middle) and number of visited orthants for orthant search method (right).}
  \label{fig:rand_orthog}
\end{figure}

\subsubsection*{(Almost) inverse non-negative matrices}
We have also tested this approach for inverse non-negative matrices. 

\begin{figure}[htbp]
  \centering
  \includegraphics[width=14cm]{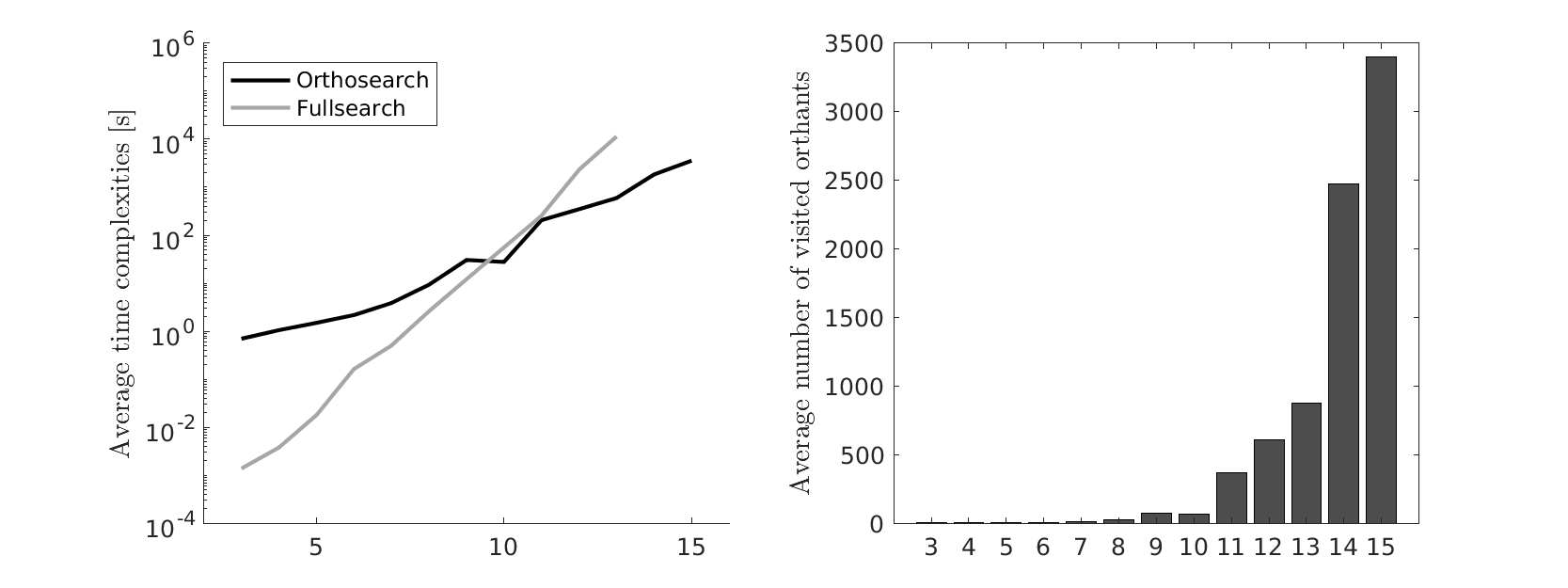}
  \caption{Results of testing estimation of the regularity radius using full search and orthant search methods for {\em random almost inverse nonnegative matrices}. Presented are differences $\overline{\delta}_r$ (left), time complexities (middle) and number of visited orthants for orthant search method (right).}
  \label{fig:almost_inv_noneg}
\end{figure}

For this class we have already derived an exact formula shown in equation~\eqref{eq:invnonneg} and computation of corresponding radius in this way is meaningless. The reason for testing this class may make sense for testing of methods validity. As expected results of testing these matrices have shown that in this case only one orthant is visited within each computation and thus these computations are very easy and their complexity is low. This, however, suggest that there exist matrices where the number of visited orthants is relatively low. To obtain results for less trivial class we have utilize class of {\em almost inverse nonnegative matrices}. Results of testing these matrices are shown in Figure~\ref{fig:almost_inv_noneg}.

We can see that most of the properties of the orthant search method is again exhibited. Moreover, due to relative proximity to structure of simple inverse nonnegative matrices, the number of visited orthants is significantly lower compared to previous cases. This indicates that having a nice structure of input matrix, although it might not be theoretically tractable, might represent a significant improvement of time complexity when regularity radius is handled via the orthant search method suggested above. Moreover, due to search character of the method its computation seems to be simply parallelized. 

\section{Conclusion}

While the problem of computing the regularity radius is NP-hard in general, a common approach to estimate its values might include an adoption of theoretical bounds or utilization of approximation algorithms. This usually means that we try to make use of a specific structure of the corresponding matrix to provide better results or to apply an approximation algorithm if necessary. This work presents results for regularity radius along such process of its analysis. The first series of results is concerned with possible special types of matrices. For a tridiagonal matrix we employ the published algorithm for regularity testing to construct an algorithm to determine polynomially the corresponding regularity radius. The problem is that this complexity depends on values of matrix elements and thus it remains open whether there exists a strongly polynomial algorithm. We also provide several exact formulas for regularity radius when the matrix is of a specific type -- namely for totally positive matrices and inverse nonnegative matrices (including M-matrices). Moreover, for rank one radius matrices we are able to transform the computation of the general radius of regularity, see equation~\eqref{eq:genregrad}, to its simpler form, see equation~\eqref{eq:regrad}. This enables us to use the approximation algorithm by Hartman and Hlad\'{i}k~\cite{Hartman2016} to acquire any predefined accuracy. Based on last mentioned result we also provide a construction of a new approximation algorithm using rank one approximation of a general radius matrix. Moving to general matrices, we start with a proof that checking whether regularity radius is infinite is a polynomial problem. Moreover we provide sharp bounds on the number of nonzero entries of radius matrix to enable infinite regularity radius. For a general matrix we also provide new bounds based on relationship between the maximum (Chebyshev) norm and the spectral norm. Finally, a new not a priori exponential approximation algorithm based on iterative search through orthants using Oettli-Prager theorem is presented along with numerical results. The presented results suggest that the method has typically significantly lower complexity compared to the full search computation based on equation~\eqref{eqRdel}. Moreover for some special types of matrices, here represented as almost inverse nonnegative, the number of visited orthants can be highly reduced.



\bibliography{rad_reg_ELA}

\end{document}